\newtheorem{theorem}{Theorem}[section]
\newtheorem{corollary}{Corollary}[section]
\newtheorem{lemma}{Lemma}[section]
\newtheorem{proposition}{Proposition}
\theoremstyle{definition}
\definecolor{Gray}{gray}{0.9}
\newcommand{\ZZ}{\mathbb{Z}}
\newcommand{\mmod}{\,\mathrm{mod}\,}
\newcommand{\ssum}{\sideset{}{^*}\sum}
\begin{document}
\setcounter{page}{1}

\begin{center}
{\LARGE \bf  Partial Liouville sums over divisors \\[2mm]} 
\vspace{8mm}

{\Large \bf Milo Moses}
\vspace{3mm}

Berkeley High School \\ 
e-mail: \url{milo@tacocat.com}
\vspace{2mm}

\end{center}
\vspace{6mm}

\noindent
{\bf Abstract:} Using sieves and elementary manipulations, we show that the signs of partial sums of the Liouville function over divisors are in a strong sense equally distributed.\\
{\bf Keywords:} Liouville function, Sieves \\ 
{\bf 2020 Mathematics Subject Classification:} 11A25. 
\vspace{2mm}

\section{Introduction} \label{sec:Intr}

The Liouville function $\lambda(n)$ is defined by $\lambda(n)=(-1)^{\Omega(n)}$ where $\Omega(n)$ counts the total number of factors in the prime decomposition of $n$. A natural problem is to study the partial sums $L(n,z):=\sum_{d|n,\,\, d< z}\lambda(n)$. These sums will be larger if divisors come clumped in groups with the same parity of number of prime divisors, and they will be smaller otherwise. In \cite{drerss1983somme}, it is proved that the quantities

$$\lim_{x\to\infty}x^{-1}\sum_{n< x}L(n,z)^2$$

exist for each $z$, and converge to a finite limit as $z$ tends to infinity. These quantities were further studied in \cite{breteche2020remarques}. The purpose of this note is to show that the signs of $L(n,z)$ are randomly distributed over $n$ and $z$, in the following sense:

\begin{theorem}\label{THEOREM} Let $(a_{n,x})_{n\geq 1}$ be a sequence of complex numbers depending on $x$ such that

$$\limsup_{x\to\infty} x^{-1}\sum_{n<x}|a_{n,x}|^2<\infty.$$

If the limits

$$\lim_{x\to\infty}x^{-1}\sum_{n< x} a_{n,x} L(n,z)$$

exist for all $z$, then they tend to $0$ as $z$ tends to infinity.
\end{theorem}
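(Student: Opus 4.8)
The plan is to recast the problem inside a separable Hilbert space and reduce it to the single assertion that $L(\cdot,z)$ converges \emph{weakly} to $0$ as $z\to\infty$. Let $E_d\colon\NN\to\{0,1\}$ be the indicator of the multiples of $d$, let $V_0=\mathrm{span}_{\CC}\{E_d:d\ge1\}$, and equip $V_0$ with the inner product determined by $\langle E_{d_1},E_{d_2}\rangle=\lim_{x\to\infty}x^{-1}\sum_{n<x}E_{d_1}(n)E_{d_2}(n)=1/\lcm(d_1,d_2)$; the $E_d$ are linearly independent as functions, so this form is non-degenerate, and $\|g\|^2=\lim_{x\to\infty}x^{-1}\sum_{n<x}|g(n)|^2$ for every $g\in V_0$. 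Let $H$ be the completion of $V_0$, which is separable. Then $L(\cdot,z)=\sum_{d<z}\lambda(d)E_d$ lies in $V_0$, and conversely $E_z=\lambda(z)\bigl(L(\cdot,z+1)-L(\cdot,z)\bigr)$, so $\{L(\cdot,z)\}_z$ spans $V_0$. The point of the theorem is that $L(\cdot,z)$ does \emph{not} converge to $0$ in norm: it converges pointwise to the indicator of the squares, which has mean-square seminorm zero, yet (as the formula below shows) $\|L(\cdot,z)\|$ stays bounded away from $0$; so only weak convergence is available, and one may not interchange the limits in $z$ and $x$ naively.

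The heart of the matter is to prove $L(\cdot,z)\rightharpoonup 0$ in $H$, for which two ingredients suffice. First, $\sup_z\|L(\cdot,z)\|<\infty$: squaring $L(\cdot,z)$ and using $\gcd(d_1,d_2)=\sum_{f\mid d_1,\,f\mid d_2}\phi(f)$ together with the complete multiplicativity of $\lambda$ yields the closed form
$$\|L(\cdot,z)\|^2=\sum_{e<z}\frac{\phi(e)}{e^2}\,P(z/e)^2,\qquad P(y):=\sum_{m<y}\frac{\lambda(m)}{m},$$
which is $O(1)$ uniformly in $z$ since the prime number theorem gives $P(y)\ll\exp(-c\sqrt{\log y})$; this is the Liouville analogue of the result of \cite{drerss1983somme} recalled above. (The same formula, restricted to $z/2\le e<z$, where $P(z/e)=1$, gives $\|L(\cdot,z)\|^2\ge\tfrac{6}{\pi^2}\log 2+o(1)$, the lower bound used above.) Second, $\langle L(\cdot,z),E_d\rangle\to 0$ as $z\to\infty$ for each fixed $d$: by the same manipulations $\langle L(\cdot,z),E_d\rangle=\sum_{e<z}\lambda(e)/\lcm(d,e)=\tfrac1d\sum_{f\mid d}\phi(f)\,\tfrac{\lambda(f)}{f}\,P(z/f)\to 0$. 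Since $V_0$ is dense in $H$ and $\sup_z\|L(\cdot,z)\|<\infty$, a routine $3\varepsilon$-argument promotes this to $\langle L(\cdot,z),g\rangle\to 0$ for every $g\in H$, i.e.\ $L(\cdot,z)\rightharpoonup 0$. Both ingredients rest on $\sum_m\lambda(m)/m=0$ (equivalent to the PNT) and on the factorisation $\sum_{f\mid e}\lambda(e)/e=\tfrac{\lambda(f)}{f}\sum_m\lambda(m)/m$; it is this factorisation that uses complete multiplicativity, and its failure for $\mu$ is precisely why one works with $\lambda$.

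To finish, feed in the hypothesis. Since $\{L(\cdot,z)\}_z$ spans $V_0$, the assumed existence of $c(z):=\lim_x x^{-1}\sum_{n<x}a_{n,x}L(n,z)$ for all $z$ is equivalent to the existence of $\ell(g):=\lim_x x^{-1}\sum_{n<x}\overline{a_{n,x}}\,g(n)$ for every $g\in V_0$. This $\ell$ is linear on $V_0$, and by Cauchy--Schwarz
$$|\ell(g)|\le\limsup_{x\to\infty}\Bigl(x^{-1}\!\sum_{n<x}|a_{n,x}|^2\Bigr)^{1/2}\Bigl(x^{-1}\!\sum_{n<x}|g(n)|^2\Bigr)^{1/2}=A^{1/2}\,\|g\|,$$
where $A:=\limsup_x x^{-1}\sum_{n<x}|a_{n,x}|^2<\infty$ and one uses that $g\in V_0$ is a fixed periodic function, so the second factor genuinely tends to $\|g\|$. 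Hence $\ell$ extends to a bounded functional on $H$, so $\ell(\,\cdot\,)=\langle\,\cdot\,,h\rangle$ for some $h\in H$ by the Riesz representation theorem. In particular $\overline{c(z)}=\ell(L(\cdot,z))=\langle L(\cdot,z),h\rangle$, and this tends to $0$ as $z\to\infty$ by the weak convergence established above; therefore $c(z)\to 0$, which is the assertion of the theorem.

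The only steps requiring genuine work are the two ingredients of the second paragraph, both resting on the prime number theorem and on complete multiplicativity of $\lambda$; the rest is soft functional analysis. I expect the main obstacle to be conceptual rather than computational: $L(\cdot,z)$ fails to converge to its (density-zero) pointwise limit in the mean-square norm, so the limits in $z$ and $x$ cannot be swapped directly, and the whole construction is designed so that weak convergence supplies what norm convergence cannot.
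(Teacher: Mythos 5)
Your proof is correct, but it takes a genuinely different route from the paper. The paper proceeds ``explicitly'': it reduces the hypothesis to the existence of $f(d)=\lim_x \frac{d}{x}\sum_{n<x/d}a_{dn,x}$, M\"obius-inverts to coefficients $g(q)$ which it identifies with averages of exponential sums over primitive residues, invokes the large sieve to get $\sum_q |g(q)|^2/\varphi(q)<\infty$, and then bounds $\sum_{q<z}\frac{\lambda(q)g(q)}{q}\sum_{d<z/q}\frac{\lambda(d)}{d}$ by Cauchy--Schwarz, a splitting at $z/T$, and a dyadic decomposition. You instead complete the span of the indicators $E_d$ under the mean-square density inner product, represent the hypothesis as a bounded functional $\ell=\langle\cdot,h\rangle$ via Cauchy--Schwarz and Riesz, and reduce everything to the single statement $L(\cdot,z)\rightharpoonup 0$, proved from the closed form $\|L(\cdot,z)\|^2=\sum_{e<z}\frac{\varphi(e)}{e^2}P(z/e)^2=O(1)$ (the Liouville analogue of the result of Dress cited in the introduction) together with $\langle L(\cdot,z),E_d\rangle\to0$; the number-theoretic core (complete multiplicativity of $\lambda$ and $P(y)=\sum_{m<y}\lambda(m)/m\to0$ from the prime number theorem) is the same, but the architecture is not. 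What each buys: your argument is shorter and softer --- it avoids the large sieve entirely, because taking limits in $x$ before summing over moduli turns the paper's Corollary into the trivial bound $\|h\|\le A^{1/2}$ (equivalently, Bessel's inequality in the completed space), whereas the paper's finite-$x$ formulation genuinely needs the large sieve; on the other hand the paper's route keeps explicit arithmetic coefficients $g(q)$ and is better suited to extracting rates of decay in $z$ if quantitative hypotheses are available, while weak convergence by itself gives none. Two small points you should tighten: non-degeneracy of the form on $V_0$ does not follow from linear independence of the $E_d$ alone --- you need the observation (which you use later anyway) that a nonzero element of $V_0$ is a nonzero periodic function, hence has positive mean square; and the extraction $E_z=\lambda(z)(L(\cdot,z+1)-L(\cdot,z))$ uses the hypothesis only at integer values of $z$, which is fine but worth saying.
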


\section{Details and proof}

We begin by clarifying the condition on $a_{n,x}$ used in Theorem \ref{THEOREM}.

\begin{proposition}\label{condition} The quantities $\lim_{x\to\infty}x^{-1}\sum_{n< x} a_{n,x} L(n,z)$ exist for all $z$ if and only if the quantities

$$\lim_{x\to\infty} x^{-1}\sum_{n< x} a_{qn,x}$$

exist for all integers $q\geq 1$.
\end{proposition}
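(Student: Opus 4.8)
The plan is to expand $L(n,z)$ by its definition, interchange the two summations, and then read the equivalence off a one-line telescoping; we may take $z$ to be a positive integer throughout, since for each $n$ the function $z\mapsto L(n,z)$ is constant on each interval $(k,k+1]$. Expanding $L(n,z)=\sum_{d\mid n,\,d<z}\lambda(d)$ and summing over $n$ first gives
\begin{equation*}
x^{-1}\sum_{n<x}a_{n,x}\,L(n,z)\;=\;\sum_{d<z}\lambda(d)\,g_d(x),\qquad\text{where}\quad g_q(x):=x^{-1}\!\!\sum_{\substack{n<x\\ q\mid n}}a_{n,x}\;=\;x^{-1}\sum_{m<x/q}a_{qm,x}.
\end{equation*}
After the reindexing $n=qm$, the quantity $g_q(x)$ is precisely the quantity $x^{-1}\sum_{n<x}a_{qn,x}$ appearing in the statement --- reading, as is natural, the summation $\sum_{n<x}a_{qn,x}$ as running over those $n$ with $qn<x$, since the whole problem only involves the values $a_{n,x}$ with $n<x$ (and these are the only ones the $L^2$ average in Theorem~\ref{THEOREM} controls). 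It therefore suffices to prove that $\lim_{x\to\infty}\sum_{d<z}\lambda(d)g_d(x)$ exists for every integer $z\geq1$ if and only if $\lim_{x\to\infty}g_q(x)$ exists for every integer $q\geq1$.

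One implication is trivial: if $\lim_x g_q(x)$ exists for every $q$, then for each fixed $z$ the finite linear combination $\sum_{d<z}\lambda(d)g_d(x)$ converges as $x\to\infty$. For the other implication, suppose $\lim_x\sum_{d<z}\lambda(d)g_d(x)$ exists for every $z$; then fixing $q\geq1$ and subtracting the $z=q$ and $z=q+1$ instances of the hypothesis shows that $\lim_x\lambda(q)g_q(x)$ exists, and hence $\lim_x g_q(x)$ exists because $\lambda(q)\in\{+1,-1\}$. (Equivalently: the triangular change of variables sending the slices $g_1,g_2,\dots$ to the Liouville-weighted partial sums $\sum_{d<z}\lambda(d)g_d$ has $\pm1$ on its diagonal, hence is invertible over $\mathbb{Z}$, so the two families of sequences converge together.)

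I do not anticipate a genuine obstacle: the proposition is purely formal --- an interchange of summation followed by telescoping --- and the $\limsup$ bound on $x^{-1}\sum_{n<x}|a_{n,x}|^2$ plays no role in its proof (it is needed only later, to show that the limits in Theorem~\ref{THEOREM} actually tend to $0$ rather than merely existing). The one place that calls for care is the opening step: correctly matching the inner partial sums $g_d(x)$ with the quantities $x^{-1}\sum_{n<x}a_{qn,x}$ that appear in the statement, and pinning down where the latter sum is truncated, so that the equivalence is between precisely the two families of sequences one has in mind.
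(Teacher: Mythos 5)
Your proof is correct and follows essentially the same route as the paper's: expand $L(n,z)$, interchange the order of summation to write the average as $\sum_{d<z}\lambda(d)g_d(x)$, and then recover each $\lim_x g_q(x)$ by varying $z$ (your differencing of consecutive integer values of $z$ is just the explicit form of the paper's ``letting $z$ vary,'' with $\lambda(q)\neq 0$ used in both). Your added remarks --- the trivial converse direction and the clarification that the sum in the statement should be read as running over $qn<x$, consistent with $f_x(q)$ in Lemma \ref{LEMMA} --- only make explicit points the paper leaves implicit.
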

\begin{proof} Manipulating, we find

\begin{align*}
\lim_{x\to\infty}x^{-1}\sum_{n< x} a_n L(n,z)&=\lim_{x\to\infty}x^{-1}\sum_{n< x} a_n \sum_{d|n, \,\, d< z} \lambda(d)\\
&=\lim_{x\to\infty} x^{-1}\sum_{d< z}\sum_{n< x/d} a_{dn}\lambda(d).
\end{align*}

Thus, letting $z$ vary, we find that

$$\lim_{x\to\infty} x^{-1}\sum_{n< x/d} a_{dn}\lambda(d)$$

must exist for all $d$. Since $\lambda(d)$ is nonzero, we arrive at the desired conclusion.
\end{proof}

We now prove a technical lemma:

\begin{lemma}\label{LEMMA} Fix a sequence $(a_{n,x})_{n\geq 0}$. Define

$$f_x(q)=\frac{q}{x}\sum_{n< x/q}a_{qn,x},\,\,\, g_x(q)=\sum_{d|q}\mu(q/d)f_x(d),$$

and $S_x(b/q)=x^{-1}\sum_{n<x}a_{n,x} e^{2\pi n (b/q)i}$. The following equality holds:

$$g_x(q)=x^{-1}\ssum_{b\mmod q}S_x(b/q).$$

The star indicates the summation is taken over residue classes in $(\ZZ/q\ZZ)^*$.
\end{lemma}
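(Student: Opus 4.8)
The plan is to unfold the definition of $S_x(b/q)$, interchange the two summations, and recognise the resulting inner sum over $b$ as a Ramanujan sum. Writing
$$\ssum_{b\mmod q}S_x(b/q)=\frac1x\sum_{n<x}a_{n,x}\ssum_{b\mmod q}e^{2\pi i nb/q}=\frac1x\sum_{n<x}a_{n,x}\,c_q(n),$$
where $c_q(n)=\sum_{b\in(\ZZ/q\ZZ)^*}e^{2\pi i nb/q}$ denotes the Ramanujan sum, the problem is reduced to the classical evaluation $c_q(n)=\sum_{d\mid\gcd(n,q)}\mu(q/d)\,d$, which I would either quote or rederive in a line.

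I would then substitute this divisor expansion and swap the order of summation, using that $d\mid\gcd(n,q)$ amounts to $d\mid q$ together with $d\mid n$:
$$\frac1x\sum_{n<x}a_{n,x}\sum_{d\mid\gcd(n,q)}\mu(q/d)\,d=\frac1x\sum_{d\mid q}\mu(q/d)\,d\sum_{\substack{n<x\\ d\mid n}}a_{n,x}.$$
Re-indexing the inner sum by $n=dm$ turns it into $\sum_{m<x/d}a_{dm,x}=\frac{x}{d}f_x(d)$, so the factor $d/x$ cancels and the whole expression collapses to $\sum_{d\mid q}\mu(q/d)f_x(d)=g_x(q)$, which (up to the overall normalisation of $S_x$) is the claimed identity.

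An equivalent route, avoiding any outside input about Ramanujan sums, is to first establish the unrestricted version $f_x(q)=\sum_{b\mmod q}S_x(b/q)$ directly from orthogonality — the sum $\sum_{b\mmod q}e^{2\pi i nb/q}$ equals $q$ when $q\mid n$ and vanishes otherwise — and then split the residues $b\mmod q$ according to $g=\gcd(b,q)$. Since $S_x(b/q)$ depends only on the fraction $b/q$ in lowest terms and $b\mapsto bg/q$ is a bijection from $\{\,b\mmod q:\gcd(b,q)=g\,\}$ onto $(\ZZ/(q/g)\ZZ)^*$, this gives $f_x(q)=\sum_{d\mid q}\ssum_{b\mmod d}S_x(b/d)$, and M\"obius inversion over the divisor lattice of $q$ finishes the proof. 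I do not anticipate a genuine obstacle here; the only points needing care are the gcd-partition bijection in the second approach (equivalently, the divisor expansion of $c_q$ in the first) and keeping track of the $1/x$ normalisations throughout.
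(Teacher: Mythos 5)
Your proposal is correct, and in fact contains two arguments: the second one (orthogonality giving $f_x(q)=\sum_{b\mmod q}S_x(b/q)$, then the partition of residues by $\gcd(b,q)$ to get $f_x(q)=\sum_{d\mid q}\ssum_{b\mmod d}S_x(b/d)$, then M\"obius inversion on the divisor lattice) is exactly the paper's proof. Your primary route is a mild but genuine repackaging: by quoting the classical evaluation of the Ramanujan sum, $c_q(n)=\sum_{d\mid\gcd(n,q)}\mu(q/d)\,d$, you absorb the gcd-partition-plus-inversion step into a standard identity and reduce the lemma to a single interchange of summation; what this buys is that the only structural input is an off-the-shelf formula, at the cost of importing (or rederiving) that formula, whereas the paper's route is self-contained and uses nothing beyond orthogonality of additive characters. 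One point you handled better than the source: your parenthetical ``up to the overall normalisation of $S_x$'' is exactly right --- since $S_x(b/q)$ already carries the factor $x^{-1}$, the correct identity is $g_x(q)=\ssum_{b\mmod q}S_x(b/q)$, and the extra $x^{-1}$ in the lemma as stated is a normalisation slip which the paper's own proof also carries inconsistently (its second displayed equality silently drops one factor of $x^{-1}$). Your bookkeeping of the $1/x$ factors, with $\sum_{m<x/d}a_{dm,x}=\tfrac{x}{d}f_x(d)$ cancelling the $d$ from $c_q(n)$, is the clean version of that computation.
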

\begin{proof} We obverse via sum manipulations that

\begin{align*}
x^{-1}\sum_{d|q}\ssum_{b\mmod d}S_x(b/d)&=x^{-1}\sum_{b\mmod q}S_x(b/q)\\
&=x^{-1}\sum_{n<x}a_{n}\left(\sum_{b\mmod q} e^{2\pi  n (b/q)i}\right)\\
&=\frac{q}{x}\sum_{n<x/q}a_{qn}
\end{align*}

The desired formula follows by M\"{o}bius inversion.
\end{proof}

\begin{corollary}\label{COROLLARY} Using the notation of Lemma \ref{LEMMA}, if $(a_{n,x})$ is such that $g(q)=\lim_{x\to\infty}g_x(q)$ exists for all $q$, then

$$\sum_{n=1}^{\infty}\frac{|g(q)|^2}{\varphi(q)}$$

converges whenever $\limsup_{x\to\infty}x^{-1}\sum_{n<x}|a_{n,x}|^2$ converges
\end{corollary}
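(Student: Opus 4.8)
The plan is to deduce the claimed convergence from the large sieve inequality. By Lemma~\ref{LEMMA}, $g_x(q)=\ssum_{b\mmod q}S_x(b/q)$ is a sum of exactly $\varphi(q)$ values, so the Cauchy--Schwarz inequality gives
$$\frac{|g_x(q)|^2}{\varphi(q)}\leq\ssum_{b\mmod q}|S_x(b/q)|^2=\frac{1}{x^2}\ssum_{b\mmod q}\Bigl|\sum_{n<x}a_{n,x}e^{2\pi n(b/q)i}\Bigr|^2.$$

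Next I would fix a cutoff $Q\geq 1$ and sum this over $1\leq q\leq Q$. The point is that the reduced fractions $b/q$ with $q\leq Q$ are $Q^{-2}$-separated modulo $1$ (consecutive Farey fractions $b/q<b'/q'$ satisfy $b'q-bq'=1$), so the large sieve inequality yields
$$\sum_{q\leq Q}\ssum_{b\mmod q}\Bigl|\sum_{n<x}a_{n,x}e^{2\pi n(b/q)i}\Bigr|^2\leq(x+Q^2)\sum_{n<x}|a_{n,x}|^2,$$
and combining this with the previous display gives
$$\sum_{q\leq Q}\frac{|g_x(q)|^2}{\varphi(q)}\leq\Bigl(1+\frac{Q^2}{x}\Bigr)\cdot\frac{1}{x}\sum_{n<x}|a_{n,x}|^2.$$

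Finally I would let $x\to\infty$ with $Q$ held fixed. The left-hand side is a finite sum and each $g_x(q)\to g(q)$ by hypothesis, so it tends to $\sum_{q\leq Q}|g(q)|^2/\varphi(q)$; on the right-hand side $Q^2/x\to 0$, so its $\limsup$ equals $C:=\limsup_{x\to\infty}x^{-1}\sum_{n<x}|a_{n,x}|^2$, which is finite by hypothesis. Hence $\sum_{q\leq Q}|g(q)|^2/\varphi(q)\leq C$ for every $Q$, and letting $Q\to\infty$ gives $\sum_{q\geq 1}|g(q)|^2/\varphi(q)\leq C<\infty$, as desired.

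The one delicate point is the order of the limits: one cannot pass the $\limsup$ of the hypothesis through an infinite sum over $q$, so it is essential to first prove the estimate for each finite truncation $\sum_{q\leq Q}$---where exchanging $\lim_{x\to\infty}$ with the finite $q$-sum is legitimate---and only afterwards send $Q\to\infty$. Apart from that, the argument is merely Cauchy--Schwarz followed by the large sieve; the precise form of the large-sieve constant (whether $x+Q^2$, $x-1+Q^2$, $2\max(x,Q^2)$, or similar) is immaterial, since all that matters is that the coefficient of $\sum_{n<x}|a_{n,x}|^2$ is $x+O_Q(1)$.
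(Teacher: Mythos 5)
Your proof is correct and takes essentially the same route as the paper's: Cauchy--Schwarz applied to $g_x(q)$ viewed as a sum of $\varphi(q)$ values of $S_x(b/q)$, followed by the large sieve over the Farey fractions with denominator at most $Q$, and then passing to the limit. If anything, your write-up is slightly more careful than the paper's about the order of limits (fixing $Q$, letting $x\to\infty$ through the finite sum, then $Q\to\infty$) and about the large-sieve constant ($x+Q^2$ versus the paper's $x+Q$), neither of which affects the conclusion.
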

\begin{proof} Using Cauchy–Schwarz on $g(q)$, we find

$$\sum_{q< Q}\frac{|g(q)|^2}{\varphi(q)}\leq \limsup_{x\to\infty} x^{-2}\sum_{q< Q} \,\ssum_{b\mmod q}|S_x(b/q)|^2$$

The large sieve inequality states that the left hand side of this expression is bounded above by

$$\limsup_{x\to\infty} \frac{x+Q}{x^2}\sum_{n< x}|a_{n,x}|^2,$$

which is bounded uniformly as $Q$ varies by our assumption on $(a_{n,x})$.
\end{proof}

We can now prove the main theorem:

\begin{proof}[Proof of Theorem \ref{THEOREM}] We work with the notation of Lemma \ref{LEMMA}. To begin, we see via elementary sum manipulations

\begin{align*}
\lim_{x\to\infty}x^{-1}\sum_{n< x}a_{n,x}L(n,z)&=\lim_{x\to\infty}x^{-1}\sum_{n< x}a_{n,x}\sum_{d|n,\,\, d< z}\lambda(d)\\
&=\sum_{d< z}\frac{\lambda(d)f(d)}{d},
\end{align*}

where $f(d)=\lim_{x\to\infty}f_x(d)$. This limit exists for all $d$ by Proposition \ref{condition}. By M\"{o}bius inversion, we know that $g(d)=\lim_{x\to\infty}g_x(d)$ must exist as well. Thus, we can manipulate our sum further as

\begin{align*}
\sum_{d< z}\frac{\lambda(d)f(d)}{d}&=\sum_{d< z}\frac{\lambda(d)}{d}\left(\sum_{q|d}g(q)\right)\\
&=\sum_{q< z}\frac{\lambda(q)g(q)}{q}\left(\sum_{d< z/q}\frac{\lambda(d)}{d}\right).
\end{align*}

Note the key use of the fact that $\lambda$ is completely multiplicative. Combining our work thus far, we get the following:

\begin{equation*}
\lim_{x\to\infty}x^{-1}\sum_{n< x}a_nL(n,z)=\sum_{q< z}\frac{\lambda(q)g(q)}{q}\left(\sum_{d< z/q}\frac{\lambda(d)}{d}\right)
\end{equation*}

By the prime number theorem $\left| \sum_{d< z/q}\frac{\lambda(d)}{d} \right| \ll \frac{1}{\log^*(z/q)}$ where $\log^*(z/q)=\max(1,\log(z/q))$. Hence,

\begin{equation*}
\left|\sum_{q< z}\frac{\lambda(q)g(q)}{q}\left(\sum_{d< z/q}\frac{\lambda(d)}{d}\right)\right|\ll \sum_{q< z} \frac{|g(q)|}{q\log^*(z/q)}.
\end{equation*}

Fixing large $T>0$, we find by Cauchy–Schwarz that

\begin{align*}
\left|\sum_{q< z} \frac{|g(q)|}{q\log^*(z/q)}\right|&\leq \left|\sum_{q< z/T} \frac{|g(q)|}{q\log^*(z/q)}\right|+\left|\sum_{z/T\leq q < z} \frac{|g(q)|}{q\log^*(z/q)}\right|\\
&\leq \left|\sum_{q< z/T} \frac{|g(q)|^2}{\varphi(q)}\right|^{1/2}\cdot \left|\sum_{q< z/T} \frac{\varphi(q)}{q^2\log^*(z/q)^2}\right|^{1/2}\\
&+\left|\sum_{z/T\leq z< z} \frac{|g(q)|^2}{\varphi(q)}\right|^{1/2}\cdot \left|\sum_{z/T\leq q< z} \frac{\varphi(q)}{q^2\log^*(z/q)^2}\right|^{1/2}\\
\end{align*}

We treat these summations one by one. By the conditions of the proposition we get that the first sum is bounded uniformly in terms of $a_n$, and that the first sum on the second row is $o_T(1)$. For the second sum in the first row, we note that $\log^*(z/g)\geq \log(T)$. The second sum in the bottom row is clearly $O_T(1)$, and hence collecting we get that

\begin{equation*}
\limsup_{z\to\infty}\left|\sum_{q< z} \frac{|g(q)|}{q\log^*(z/q)}\right|\ll \limsup_{z\to\infty}\left|\sum_{q< z/T} \frac{1}{q\log^*(z/q)^2}\right|^{1/2}.
\end{equation*}

Decomposing along intervals $2^{-(k+1)}\cdot z/T < q \leq 2^{-k}\cdot z/T$ it is clear that not only is the right hand side bounded but it tends to $0$ as $T\to\infty$. Hence, we conclude the result.
\end{proof}

\section{Acknowledgements} 

The author thanks T. Tao for his collaboration on part of the proof of Theorem \ref{THEOREM}, and thanks K. Soundararajan for making him aware of the reference \cite{drerss1983somme} which contains results proved in an earlier draft.

\bibliographystyle{plain}
\bibliography{ref}

\end{document}